\newtheorem{theorem}{Theorem}
\newtheorem{corollary}[theorem]{Corollary}
\newtheorem{remark}[theorem]{Remark}
\newtheorem{proposition}[theorem]{Proposition}
\newcommand{\ov}[1]{\overline{#1}}
\title{A note on semicentral Idempotents}
\subjclass[2000]{17C27, 16U99, 16S50}
\keywords{idempotents, semicentral idempotents, Dedekind-finite rings}
\author{Christian Lomp}
\address{Department of Mathematics of the Faculty of Science and  Center of Mathematics, University of Porto, Rua Campo Alegre, 687, 4169-007 Porto, Portugal}
\email{clomp@fc.up.pt}
\author{Jerzy Matczuk}
\address{Institute of Mathematics, Warsaw University
Banacha 2, 02-097 Warsaw, Poland }
\email{jmatczuk@mimuw.edu.pl}
\thanks{This work was done during a visit of the first author to the University of Warsaw. He would like to thank the second author and all members of the Mathematics Department for their warm hospitality.  The first author would like to thank WCMCS for financial support. Moreover the first author was partially supported by CMUP (UID/MAT/00144/2013), which is funded by FCT (Portugal) with national (MEC) and European structural funds (FEDER), under the partnership agreement PT2020.}
\begin{document}
\maketitle

\begin{abstract}
In this note we answer the question raised by Han et al. in \cite{HanLeePark} whether an idempotent isomorphic to a semicentral idempotent is itself semicentral. We show that rings with this property are precisely the Dedekind-finite rings. An application to module theory is given.
\end{abstract}

\section{Introduction}
Let $R$ be a unital associative ring with unit.  Any idempotent $e\in R$ leads to a representation of $R$ as a generalized matrix ring. An idempotent $e$ is called left (respectively right) {\bf semicentral} if $(1-e)Re=0$
(respectively $eR(1-e)=0$). Hence right (respectively left) semicentral idempotents lead to a representation of $R$ as an upper (lower) generalized triangular matrix ring (see also \cite{BirkenmeierParkRizvi}). Clearly an idempotent is central if and only if it is left and right semicentral.

The purpose of this note is to answer \cite{HanLeePark}*{Question 1} which asks whether any idempotent isomorphic to a semicentral idempotent is itself semicentral. We will show that this property characterises Dedekind-finite rings.

\section{Isomorphic semicentral idempotents}
Two idempotents $e$ and $f$ are said to be {\bf isomorphic} if $eR\simeq fR$ as right $R$-modules.
It is not difficult to see (but the reader may also consult \cite{Lam}*{21.20}) that two idempotents $e$ and $f$ are isomorphic if and only if  there are elements $a,b \in R$ such that $e=ab$ and $f=ba$.

\begin{remark}\label{Jurek}
 Suppose that any idempotent
 that is isomorphic to a left semicentral idempotent of a ring $R$ is itself left semicentral. Then $R$ is Dedekind-finite.
\end{remark}
\begin{proof}
If $ab=1$ for some $a,b \in R$, then $e=ba$ is an idempotent that is isomorphic to the central idempotent $1$. By hypothesis $e$ is left semicentral. Thus $1=abab=(ae)b=(eae)b=e(abab)=e$.
\end{proof}
Note that a similar proof for Remark \ref{Jurek} shows that $R$ is Dedekind-finite, provided any idempotent isomorphic to a right semicentral idempotent is itself right semicentral.

The following remark is well-known and can be found for example in \cite{Lam}*{4.8} and \cite{BreazCalugareanuSchultz}*{2.4}, but we include a short proof for sake of completeness:
\begin{remark}\label{Jacobson-Dedekind}\label{Corner-Dedekind}
Let $R$ be a Dedekind-finite ring with Jacobson radical $J$, then $R/J$ is Dedekind-finite and
$eRe$ is Dedekind-finite, for any non-zero idempotent $e\in R$.
\end{remark}

\begin{proof}
Suppose $\ov{a}\ov{b}=\ov{1}$ in $\ov{R}=R/J$ and $a,b\in R$. Then $1-ab\in J$ implies that $ab$ is invertible in $R$. Thus there exists $u\in U(R)$ such that $abu=1$. As $R$ is Dedekind-finite, also $1=bua$ holds.
Hence $\ov{1}=\ov{bu}\ov{a}$ implies $\ov{b}=\ov{bu}\ov{a}\ov{b}=\ov{bu}$, i.e. $\ov{b}\ov{a}=1$.

Now suppose $S=eRe$ and $f=1-e$ for some idempotent $e\in R$. If $a,b \in S$ with $ab=1_S = e$, then $(a+f)(b+f)=ab+f=e+f=1$. Since $R$ is Dedekind-finite, $1=(b+f)(a+f)=ba+f$ and $1_S=e=ba$ follows.
\end{proof}

\begin{remark}\label{isomorphicIdempotents}
Suppose that $e$ and $f$ are isomorphic idempotents such that $e=ab$ and $f=ba$, for $a,b\in R$. Since $f=fba$, we also have $(fb)(ea)=fbaba=f^3=f$. Similarly $e=eab$ shows $(ea)(fb)=eabab=e^3=e$. Therefore we may replace $a$ by $ea$ and $b$ by $fb$ and assume that $a\in eR$ and $b\in fR$.
\end{remark}

\begin{remark}\label{central}
Any left or right semicentral idempotent in a semiprime ring is central.
\end{remark}

\begin{proof}
If $e$ is a left semicentral idempotent in a semiprime ring $R$, then $(1-e)Re=0$ and $ReR(1-e)R$ is a nilpotent ideal.
Hence $eR(1-e)=0$ and $e$ is central.
\end{proof}

\begin{proposition}\label{conjugated}
Any idempotent isomorphic to a left or right semicentral idempotent in a Dedekind-finite ring is already conjugated to it.
\end{proposition}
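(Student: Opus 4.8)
The plan is to show that, for a left semicentral idempotent $e$ and an idempotent $f\simeq e$, one necessarily has the \emph{equality} $eR=fR$, and then to conjugate with an explicit unit. I will only treat the left semicentral case: if $e$ is right semicentral one passes to the opposite ring $R^{\mathrm{op}}$, in which $e$ becomes left semicentral, which is again Dedekind-finite, and in which the notions of isomorphic and of conjugate idempotents are unchanged.

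So assume $e$ is left semicentral, $fR\simeq eR$, and write $e=ab$, $f=ba$; we may assume $e\neq 0$, otherwise $f=0=e$. First I would normalise as in Remark~\ref{isomorphicIdempotents}, taking $a=ea$ and $b=fb$; then $af=(ab)a=ea=a$ and $be=(ba)b=fb=b$, so $a\in eRf$ and $b\in fRe$. Now I bring in left semicentrality: from $be=ebe$ I get $b=be=ebe=eb$, hence $f=ba=(eb)a=e(ba)=ef$, that is, $fR\subseteq eR$. Since $fR$ is a direct summand of $R_R$ contained in $eR$, the modular law gives $eR=fR\oplus C$ with $C=eR\cap(1-f)R$, and replacing the summand $fR$ by an isomorphic copy of $eR$ yields $eR\simeq eR\oplus C$. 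As $\operatorname{End}_R(eR)\cong eRe$ is Dedekind-finite by Remark~\ref{Corner-Dedekind}, the module $eR$ has no proper direct summand isomorphic to it, so $C=0$ and $eR=fR$. Finally, from $eR=fR$ one gets $ef=f$ and $fe=e$, and a short calculation confirms that $v=1+e-f$ is a unit, with inverse $1+f-e$, satisfying $vev^{-1}=f$; this is the desired conjugation.

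The step I expect to carry the weight is the passage in the middle: the point is that left semicentrality (together with the normalisation of Remark~\ref{isomorphicIdempotents}) upgrades the hypothesis to the inclusion $fR\subseteq eR$, so that the given isomorphism $eR\simeq fR$ exhibits $eR$ as isomorphic to one of its own direct summands; this is impossible, by the direct finiteness of $eR$ coming from Remark~\ref{Corner-Dedekind}, unless $eR=fR$. Once $eR=fR$ is established, the fact that two idempotents generating the same one-sided ideal are conjugate via $1+e-f$ is routine.
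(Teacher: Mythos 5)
Your proof is correct, and it takes a genuinely different route from the one in the paper. The paper passes to $\ov{R}=R/J$, invokes Remark \ref{central} to make $\ov{e}$ central in the semiprime ring $\ov{R}$, deduces $\ov{e}=\ov{f}$ from Dedekind-finiteness of the corner $\ov{e}\ov{R}$, and then lifts the identification to a conjugation in $R$ via the unit $1+x(2e-1)$ with $x=f-e\in J$, citing \cite{KanwarLeroyMatczuk}. You instead stay entirely inside $R$: after the normalisation of Remark \ref{isomorphicIdempotents}, left semicentrality gives $b=eb$, hence $f=ef$ and $fR\subseteq eR$; the modular law then exhibits $fR$ as a direct summand of $eR$ isomorphic to $eR$, and direct finiteness of the module $eR$ (equivalent to Dedekind-finiteness of $\mathrm{End}(eR)\cong eRe$, which is Remark \ref{Corner-Dedekind}) forces $eR=fR$, after which $v=1+e-f$ conjugates $e$ to $f$ --- all your computations ($vev^{-1}=f$ from $ve=e=fv$, and $ef=f$, $fe=e$) check out, as does the reduction of the right semicentral case to $R^{\mathrm{op}}$. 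Your argument buys a stronger intermediate conclusion ($eR=fR$ on the nose, not merely $\ov{e}=\ov{f}$ modulo $J$), avoids the Jacobson radical, the semiprimeness argument and the external reference for the lifting unit; the only imported fact is the standard equivalence between Dedekind-finiteness of $\mathrm{End}(M)$ and $M$ not being isomorphic to a proper direct summand of itself, which deserves the one-line verification ($\beta\alpha=1$ with $\alpha$ the corestricted isomorphism $eR\to fR$ and $\beta$ the projection followed by its inverse, so $\alpha\beta$ is the idempotent projection onto $fR$ and must equal $1$). The paper's route, by contrast, isolates the structural reason behind the statement (semicentral becomes central modulo the radical) and reuses lemmas already established in the note.
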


\begin{proof}
Let $e$ be a left semicentral idempotent in a Dedekind-finite ring $R$ and let $f$ be an idempotent  that is isomorphic to $e$. By Remark \ref{Jacobson-Dedekind}, $\ov{R}=R/J$ is Dedekind-finite and by Remark \ref{central} $\ov{e}$ is central. Set $S:=\ov{e}\ov{R}$. Since $e$ and $f$ are isomorphic in $R$, also $\ov{e}$ and  $\ov{f}$ are isomorphic in $\ov{R}$.  By Remark \ref{isomorphicIdempotents}, there exist $\ov{a}\in S$ and $\ov{b}\in \ov{f}\ov{R}$ such that $ \ov{e}=\ov{a}\ov{b}$ and $\ov{f} =\ov{b}\ov{a}$.
Then using that $\ov{e}$ is central we have $\ov{f}=\ov{f}^2 = \ov{b}\ov{e}\ov{a}=\ov{e}\ov{f}\in S$ and therefore also $\ov{b}\in S$.
As $1_S=\ov{e}=\ov{a}\ov{b}$ and $S=\ov{e}\ov{R}$ is Dedekind-finite by Remark \ref{Corner-Dedekind}, we have $\ov{f}=\ov{b}\ov{a}=1_S=\ov{e}$. Thus there exist $x\in J$ such that $f=e+x$. By \cite{KanwarLeroyMatczuk}*{Proposition 9}, $ue=fu$ for $u=1+x(2e-1)$ holds. Since $x\in J$, $u$ is invertible and hence $f=ueu^{-1}$.
\end{proof}

\begin{theorem}\label{Dedekind-Finite-Theorem}
The following statements are equivalent for a ring $R$:
\begin{enumerate}
\item[(a)] Any idempotent that is isomorphic to a left semicentral idempotent is itself left semicentral.
\item[(a')] Any idempotent that is isomorphic to a right semicentral idempotent is itself right semicentral.
\item[(b)] Any idempotent that is isomorphic to a left or right semicentral idempotent is conjugated to it.
\item[(c)] $R$ is Dedekind-finite.
\end{enumerate}
\end{theorem}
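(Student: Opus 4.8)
The plan is to close a cycle of implications, leaning on the results already in place. The implications $(a)\Rightarrow(c)$ and $(a')\Rightarrow(c)$ are exactly Remark~\ref{Jurek} and the observation immediately following it. The implication $(c)\Rightarrow(b)$ is Proposition~\ref{conjugated}. So the only genuinely new thing to do is to recover $(a)$ and $(a')$ from $(b)$, and for that it is enough to check that a conjugate of a left (respectively right) semicentral idempotent is again left (respectively right) semicentral.

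For that last step I would compute directly. If $e$ is left semicentral and $f=ueu^{-1}$ for some unit $u\in U(R)$, then $1-f=u(1-e)u^{-1}$, and since conjugation by $u$ permutes $R$ we get
\[(1-f)Rf = u(1-e)u^{-1}Rueu^{-1} = u(1-e)(u^{-1}Ru)eu^{-1} = u\,(1-e)Re\,u^{-1} = 0,\]
so $f$ is left semicentral; the symmetric computation $fR(1-f)=ueR(1-e)u^{-1}=0$ handles the right semicentral case. Now given $(b)$, an idempotent $f$ isomorphic to a left semicentral idempotent $e$ is conjugate to $e$, hence left semicentral by the above, which is $(b)\Rightarrow(a)$; replacing ``left'' by ``right'' throughout gives $(b)\Rightarrow(a')$.

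Assembling these, $(a)\Rightarrow(c)\Rightarrow(b)\Rightarrow(a)$ and $(a')\Rightarrow(c)\Rightarrow(b)\Rightarrow(a')$, so all four statements are equivalent. I do not anticipate a real obstacle: the substantive work (reducing modulo $J$, using that $\ov R$ and corners $eRe$ stay Dedekind-finite, and the lifting of the conjugating unit via \cite{KanwarLeroyMatczuk}*{Proposition 9}) has already been absorbed into Proposition~\ref{conjugated}, so the theorem itself is just a bookkeeping of implications together with the elementary fact that semicentrality is conjugation-invariant.
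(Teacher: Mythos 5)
Your proposal is correct and follows essentially the same route as the paper: the cycle $(a)\Rightarrow(c)$ via Remark~\ref{Jurek}, $(c)\Rightarrow(b)$ via Proposition~\ref{conjugated}, and $(b)\Rightarrow(a)$ (and $(b)\Rightarrow(a')$) from the invariance of semicentrality under conjugation, which the paper phrases as invariance under any ring automorphism and you verify by the same explicit computation.
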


\begin{proof}
Remark \ref{Jurek} proves $(a) \Rightarrow (c)$ and Proposition \ref{conjugated} shows $(c) \Rightarrow (b)$. The implication $(b) \Rightarrow (a)$ is clear, because if $\alpha:R\rightarrow R$ is  any ring isomorphism,
then $(1-e)Re=0$ implies $(1-\alpha(e))R\alpha(e)=0$, for any $e\in R$.

\end{proof}


Let $M$ be a non-zero right $R$-module with endomorphism ring $S$. It is well-known, that   any idempotent $e$ of $S$ corresponds to a direct summand $\mathrm{Im}(e)$ of $M$ and that this correspondence is bijective. In particular, isomorphic direct summands of $M$ correspond to isomorphic idempotents in $S$.  From  \cite{LamEx}*{21.16} it is known, that two idempotents $e$ and $f$ are conjugate in $S$ if and only if $e$ and $f$ as well as $1-e$ and $1-f$ are isomorphic. If $e$ is an idempotent corresponding to a direct summand $D$ of $M=D\oplus D'$, then $1-e$ corresponds to $D'\simeq M/D$. Hence two idempotents $e$ and $f$ in $S$  are conjugate in $S$ if and only if for their corresponding direct summands $D_1$ and $D_2$, $D_1\simeq D_2$ and $M/D_1\simeq M/D_2$ holds. Moreover, it is easy to see that an idempotent  $e\in S$, with corresponding direct summand $D=\mathrm{Im}(e)$, is left semicentral if and only if $\mathrm{Hom}(D,M/D)=0$.

 Using the above we obtain the following module theoretic version of Theorem \ref{Dedekind-Finite-Theorem}:

 \begin{corollary}
The following statements are equivalent for a right $R$-module $M$.
\begin{enumerate}
\item[(a)] For any isomorphic direct summands $D_1$ and $D_2$ of $M$, if $\mathrm{Hom}(D_1,M/D_1)=0$, then $\mathrm{Hom}(D_2,M/D_2)=0$
\item[(a')] For any isomorphic direct summands $D_1$ and $D_2$ of $M$, if $\mathrm{Hom}(M/D_1,D_1)=0$, then $\mathrm{Hom}(M/D_2,D_2)=0$
\item[(b)] For any isomorphic direct summands $D_1$ and $D_2$ of $M$, if $\mathrm{Hom}(D_1,M/D_1)=0$ or $\mathrm{Hom}(M/D_1,D_1)=0$, then $M/D_1\simeq M/D_2$.
\item[(c)] $\mathrm{End}(M)$ is Dedekind-finite.
\end{enumerate}
\end{corollary}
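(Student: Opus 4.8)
The plan is to transport Theorem \ref{Dedekind-Finite-Theorem} from the endomorphism ring $S=\mathrm{End}(M)$ to the module $M$ by means of the dictionary assembled in the paragraph preceding the corollary. Recall that $e\mapsto\mathrm{Im}(e)$ is a bijection between idempotents of $S$ and direct summands of $M$, that isomorphic idempotents correspond to isomorphic direct summands, and that for $D=\mathrm{Im}(e)$ with $M=D\oplus D'$ the idempotent $1-e$ corresponds to $D'\simeq M/D$. The two characterisations that do the work are: an idempotent $e\in S$ with $D=\mathrm{Im}(e)$ is left semicentral if and only if $\mathrm{Hom}(D,M/D)=0$ (already noted above), and, dually, $e$ is right semicentral if and only if $\mathrm{Hom}(M/D,D)=0$. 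The latter I would deduce from the identity $eS(1-e)=(1-(1-e))S(1-e)$, which shows that $e$ is right semicentral exactly when $1-e$ is left semicentral; applying the left-semicentral criterion to the summand $\mathrm{Im}(1-e)=D'$ and using $D'\simeq M/D$ and $M/D'\simeq D$ gives the stated form.

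Granting this, each clause of the corollary is just the corresponding clause of Theorem \ref{Dedekind-Finite-Theorem} for the ring $R:=S$, read through the dictionary. If $D_1,D_2$ are direct summands of $M$ with idempotents $e_1,e_2\in S$, then ``$D_1$ and $D_2$ are isomorphic direct summands'' means exactly ``$e_1$ and $e_2$ are isomorphic idempotents of $S$''; clause (a) of the corollary then says that left semicentrality of $e_1$ forces that of $e_2$, i.e.\ clause (a) of the theorem for $S$, and likewise clause (a') matches (a') of the theorem via the right-semicentral criterion above. Clause (c) is literally clause (c) of the theorem for $S$.

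For clause (b) the only extra observation is the one recorded before the corollary: idempotents $e_1,e_2$ of $S$ are conjugate if and only if $D_1\simeq D_2$ and $M/D_1\simeq M/D_2$, so once $D_1\simeq D_2$ is already assumed, conjugacy of $e_1$ and $e_2$ amounts precisely to $M/D_1\simeq M/D_2$. Hence clause (b) of the corollary coincides with clause (b) of the theorem for $S$. The chain of equivalences (a)$\Leftrightarrow$(a')$\Leftrightarrow$(b)$\Leftrightarrow$(c) then follows immediately from Theorem \ref{Dedekind-Finite-Theorem}. I expect no genuine obstacle here: the only point that is not pure bookkeeping is verifying the right-semicentral entry of the dictionary, which is the short argument indicated above.
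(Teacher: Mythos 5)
Your proof is correct and follows exactly the route the paper intends: the corollary is obtained by translating Theorem \ref{Dedekind-Finite-Theorem} for $S=\mathrm{End}(M)$ through the idempotent--summand dictionary set out in the paragraph preceding it. The only item you add beyond what the paper records explicitly is the dual criterion that $e$ is right semicentral if and only if $\mathrm{Hom}(M/D,D)=0$, and your derivation of it (via $e$ right semicentral $\Leftrightarrow$ $1-e$ left semicentral, applied to $D'\simeq M/D$) is exactly the intended one.
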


\begin{bibdiv}
 \begin{biblist}
 \bib{BirkenmeierParkRizvi}{book}{
   author={Birkenmeier, Gary F.},
   author={Park, Jae Keol},
   author={Rizvi, S. Tariq},
   title={Extensions of rings and modules},
   publisher={Birkh\"auser/Springer, New York},
   date={2013},
   pages={xx+432},
   isbn={978-0-387-92715-2},
   isbn={978-0-387-92716-9},
   review={\MR{3099829}},
   doi={10.1007/978-0-387-92716-9},
}

\bib{BreazCalugareanuSchultz}{article}{
   author={Breaz, Simion},
   author={C{\u{a}}lug{\u{a}}reanu, Grigore},
   author={Schultz, Phill},
   title={Modules with Dedekind finite endomorphism rings},
   journal={Mathematica},
   volume={53(76)},
   date={2011},
   number={1},
   pages={15--28},
   issn={1222-9016},
   review={\MR{2840625}},
}
 \bib{HanLeePark}{article}{
  author={Han, Juncheol},
   author={Lee, Yang},
   author={Park, Sangwon},
   title={Semicentral idempotents in a ring},
   journal={J. Korean Math. Soc.},
   volume={51},
   date={2014},
   number={3},
   pages={463--472},
   issn={0304-9914},
   review={\MR{3206399}},
   doi={10.4134/JKMS.2014.51.3.463}
 }

\bib{KanwarLeroyMatczuk}{article}{
   author={Kanwar, Pramod},
   author={Leroy, Andr{\'e}},
   author={Matczuk, Jerzy},
   title={Idempotents in ring extensions},
   journal={J. Algebra},
   volume={389},
   date={2013},
   pages={128--136},
   issn={0021-8693},
   review={\MR{3065996}},
   doi={10.1016/j.jalgebra.2013.05.010},
}

\bib{Lam}{book}{
   author={Lam, T. Y.},
   title={A first course in noncommutative rings},
   series={Graduate Texts in Mathematics},
   volume={131},
   edition={2},
   publisher={Springer-Verlag, New York},
   date={2001},
   pages={xx+385},
   isbn={0-387-95183-0},
   review={\MR{1838439}},
   doi={10.1007/978-1-4419-8616-0},
}
\bib{LamEx}{book}{
   author={Lam, T. Y.},
   title={Exercises in classical ring theory},
   series={Problem Books in Mathematics},
   edition={2},
   publisher={Springer-Verlag, New York},
   date={2003},
   pages={xx+359},
   isbn={0-387-00500-5},
   review={\MR{2003255}},
}

\end{biblist}
\end{bibdiv}

\end{document}